\newtheorem{theorem}{Theorem}[section]
\newtheorem{lemma}[theorem]{Lemma}
\newtheorem{proposition}[theorem]{Proposition}
\theoremstyle{definition}
\newtheorem{definition}[theorem]{Definition}
\theoremstyle{remark}
\newtheorem{remark}[theorem]{Remark}
\numberwithin{equation}{section}
\newcommand{\R}{\mathbb{R}}
\newcommand{\eps}{\varepsilon}
\newcommand{\g}{\gamma}
\newcommand{\s}{\sigma}
\newcommand{\wt}{\widetilde}
\newcommand{\dx}{\partial_x}
\newcommand{\dt}{\partial_t}
\newcommand{\LRA}{\Longrightarrow}
\newcommand{\noi}{\noindent}
\newcommand{\N}{\mathbb{N}}
\newcommand{\bul}{\bullet}
\newcommand{\E}{\mathbb{E}}
\newcommand{\ind}{\mathbf 1}
\newcommand{\F}{\mathcal{F}} 
\newcommand{\w}{\textup{w}}
\newcommand{\PP}{\mathbb{P}}
\newcommand{\dl}{\delta}
\newcommand{\al}{\alpha}
\newcommand{\K}{\mathbf{K}}
\renewcommand{\o}{\omega}
\renewcommand{\O}{\Omega}
\newcommand{\les}{\lesssim}
\newcommand{\II}{\text{I \hspace{-2.8mm} I} }
\def\bX{\mathbb{X}}
\def\bY{\mathbb{Y}}
\begin{document}

\title[ASCLT for  HAM with L\'evy noise]{Almost sure central  limit theorem for the hyperbolic Anderson model with L\'evy white noise}

\author{Raluca M. Balan}
\address{Department of Mathematics and Statistics,
University of Ottawa,
150 Louis Pasteur Private,
Ottawa, ON, K1N 6N5,
Canada}

\email{Raluca.Balan@uottawa.ca}


\author{Panqiu Xia}
\address{Department of Mathematics and
Statistics,
Auburn University,
221 Parker Hall,
Auburn, Alabama 36849,
USA}
\email{pqxia@auburn.edu}
\thanks{PX is partially supported by NSF grant DMS-2246850.}

\author{Guangqu Zheng}
\address{ Department of Mathematical Sciences,
University of Liverpool,
Mathematical Sciences Building,
Liverpool, L69 7ZL,
United Kingdom}

\email{guangqu.zheng@liverpool.ac.uk}

\subjclass[2020]{35Q35, 60F15, 60H30}



\keywords{Almost sure central limit theorem;
hyperbolic Anderson model;
 Ibragimov-Lifshits' criterion;
 second-order Gaussian Poincar\'e inequality;
 Clark-Ocone formula;
Malliavin calculus; L\'evy   noise.}

\begin{abstract}
In this paper, we present an almost sure central limit theorem
(ASCLT) for the hyperbolic Anderson model (HAM) with a L\'evy white noise
in a finite-variance setting, complementing a recent work   by
Balan and Zheng  ({\it Trans.~Amer.~Math.~Soc.}, 2024) 
 on the (quantitative) central limit theorems for
the solution to the HAM. We provide two different proofs: one uses the
Clark-Ocone formula and takes advantage of the martingale structure
of the white-in-time noise, while the other is obtained by combining
the second-order Gaussian Poincar\'e inequality with
Ibragimov and Lifshits' method of characteristic functions.  
Both approaches are different from
the one developed in the PhD thesis of C. Zheng (2011), allowing us to establish the ASCLT 
without  lengthy computations of star contractions.
Moreover, the second approach is expected to be useful for similar studies
on SPDEs with colored-in-time noises, whereas the former, based on
It\^o calculus, is not applicable.
\end{abstract}

\maketitle


.

\section{Introduction}

 This short note is devoted to the study of   a stochastic (linear) wave equation 
 \noi
\begin{align}
\begin{cases}
&( \dt^2 - \dx^2) u   = u \dot{L} \\[0.5em]
&\big( u(0, \bul), \dt u(0, \bullet) \big) = (1, 0)
\end{cases}
\quad {\rm on}
\quad (t,x) \in\R_+\times\R,
\label{SWE}
\end{align}
 driven by a space-time 
 {\it pure-jump
 L\'evy white noise}; see Section \ref{Sec_levy} for a precise description of the noise and the solution. 


\subsection{Central limit theorems for SPDEs}  \label{SS11}

In a recent work \cite{HNV}, Huang, Nualart, and  Viitasaari established
a central limit theorem (CLT) for the spatial integral of the solution to
a stochastic nonlinear heat equation with space-time Gaussian white noise
and constant initial condition
on $\R_+\times\R$:
\begin{align}\label{she}
( \dt - \tfrac{1}{2} \dx^2) u   =\s(u) \dot{W} 
\quad \text{and} \quad u(0, x) = 1,
\end{align}
where $\dot{W}$ denotes the space-time Gaussian white noise
on $\R_+\times\R$ and the nonlinearity $\s(u)$ is described by
a Lipschitz function $\s: \R\to\R$.  
More precisely,   letting $u = \{u(t,x)\}_{(t,x) \in \R_+ \times \R}$ be the solution to \eqref{she}, and denoting
(with $t_0 > 0$ fixed)
\begin{align}\label{FK}
F_\theta \coloneqq \int_{-\theta}^\theta \big[ u(t_0, x) - 1 \big] dx
\quad
\text{and}
\quad  \s_\theta \coloneqq \sqrt{  {\rm Var}(F_\theta)  },
\end{align}
for any $\theta>0$, the authors of
\cite{HNV} established that $F_{\theta}/\sigma_{\theta}$ admits Gaussian fluctuation
 as $\theta\to \infty$.
This result was partially motivated by the study of mathematical intermittency
of the random field solution.
In retrospect,
(a)
 it is not hard to expect that the solution $u$
is spatially stationary  due to the constant initial condition and homogeneous noise;
(b)
  it is also natural to investigate furthermore the spatial ergodicity of $u$ that will lead
to the first-order result, a law of large number:  as $\theta\to+\infty$,
$
\frac{1}{2\theta} \int_{-\theta}^{\theta}  u(t,x) dx \to 1
$
in $L^2(\mathbb{P})$  and almost surely;
(c)
therefore, the central limit theorem (the second-order fluctuation)
would naturally come into the picture.

Since the appearance of the work \cite{HNV}, there have been a growing body of literature
on similar CLT results for heat equations with various Gaussian noises;
see, for example,
\cite{HNVZ,NZ20a, NSZ21, CKNP21, CKNP22, CKNPjfa,
NXZ22,NZ22, PU22, LP22}.
Meanwhile, such a program was   carried out by Nualart,
Zheng, and their collaborators
to investigate the stochastic  (nonlinear)  wave equations driven by Gaussian noises;
see  \cite{DNZ, BNZ, NZ22, NZ20b, BNQSZ}  and see also the recent works 
\cite{Ebina23a,Ebina23b}
by Ebina on three and higher-dimensional stochastic wave equations. In a recent paper \cite{BZ23}
 by  Balan and Zheng,
 a similar program for the SPDEs with L\'evy noises
 has been initiated; see   Section \ref{Sec_levy} for an overview.

\subsection{Almost sure central limit theorems} \label{SS12}

The almost sure central limit theorem (ASCLT) was first formulated by
Paul L\'evy in his book  \cite[page 270]{Levy54} without a proof.
It had not gained much attention until being rediscovered by various authors 
 in the 1980's
(\cite{Fisher87, Bro88, Schatte88, LP90}). 
The simplest form of ASCLT can be stated as follows.
Let $\{X_n\}_{n\geq 1}$ be an independent and identically distributed sequence
of centered random variables with the unit variance. Then,
for $\PP$-almost every $\omega\in\Omega$, 
\begin{align}\label{AL1}
\frac{1}{\log n} \sum_{k=1}^n \frac1k \dl_{\frac{S_k}{\sqrt{k}}(\omega)} \LRA \g
\quad
\text{as $n\to+\infty$},
\end{align}
 where $\delta_x$ denotes the Dirac mass at $x$, $\gamma$ stands  for the standard normal distribution, 
 and  ``$\LRA$'' represents the weak convergence of finite measures.
In other words, the Gaussian asymptotic behavior can be observed
along a generic trajectory via this logarithmic average.
It was explored in several papers \cite{LP90, BD94, AW96, Les00}
that the above hypothesis of i.i.d. sequence of random variables with mean zero and unit variance is optimal for ASCLT.
See also \cite{BC01, Jon07} for more details. Let us first state
a few definitions, one of which generalizes \eqref{AL1}.

\begin{definition}\label{ASCLT}
{\rm (i) [Discrete version]}
A sequence $\{ F_k\}_{k\in\N}$ of real random variables is said to satisfy
the  \textup{ASCLT} if for $\PP$-almost every
$\omega\in\Omega$,

\noi
\begin{align} \label{def1}
\mu_N^\o \coloneqq
 \frac{1}{\log N} \sum_{k=1}^N \frac{1}{k}
\dl_{F_k(\omega)} \LRA \g
\quad\text{as $N\to\infty$.  }
 \end{align}

 \smallskip
 \noi
 {\rm (ii) [Continuum version]}
 A family  $\{ F_y\}_{y\geq 1}$ of real random variables is said to satisfy
the  \textup{ASCLT} if for $\PP$-almost every
 $\omega\in\Omega$, the map $y \mapsto F_y(\omega)$ is almost surely measurable and
 \footnote{The (random) measure  $\nu_T^\omega$ can be understood by first testing it
 with   $f\in C_c(\R)$ (continuous function with compact support), i.e.,
 ($\star$)\,\,\,
$
 \langle \nu_T^\o, f \rangle =  \frac{1}{\log T} \int_1^T f\big( F_y(\omega) \big) \frac{dy}{y}, 
$
 which defines a positive linear functional on $C_c(\R)$,
 and hence, as a result of Riesz's representation theorem,
 $\nu_T^\omega$ defined in \eqref{def2}
 makes sense as the unique Radon measure satisfying ($\star$)

 }

 \noi
 \begin{align} \label{def2}
\nu_T^\o
\coloneqq \frac{1}{\log T} \int_1^T \dl_{F_y(\omega)}   \frac{dy}{y}
 \LRA \g
 \quad\text{as $T\to+\infty$.}
  \end{align}

 \end{definition}

 In this note, we aim
 to establish an ASCLT for the spatial integrals
 of the solution to an SPDE. Then, it is more natural
 for us to proceed with the continuum version of Definition \ref{ASCLT},
 while the discrete analogue can be dealt with in the same way.

 \begin{remark} \rm   \label{rem1}

 (i) For simplicity,  we take the logarithmic average in \eqref{def1} and \eqref{def2}. 
 In fact, one can consider more general weights, for example, using
 $(d_k, D_N)$ in place of $(\frac{1}{k}, \log N)$,
where
$
d_k > 0$,  $D_N = d_1+ \dots + d_N$ with
$D_{N+1} / D_N\to1$ as $N\to+\infty$; see also the discussion in \cite[page 204]{LP90}.

 \smallskip

 \noi
 (ii)    By the separability of $\R$, there exists a \emph{countable} family $\{ \phi_n\}_{n\in\N}$
 of bounded Lipschitz functions on $\R$ such that $\{ \phi_n\}_{n\in\N}$  forms a
 {\it separating class} for the weak convergent probability measures on $\R$;
 cf.  \cite[Proposition 2.2]{Jon07}.
Thus, the validity of \eqref{def2} (for $\PP$-almost every 
 $\o\in\O$) is equivalent to each of the following statements:\footnote{(b) is 
equivalent to the seemingly stronger statement:
  (b')
 almost surely, for any bounded continuous $f$,
$\frac{1}{\log T} \int_1^T  \frac{1}{\theta} f(F_\theta) \,d\theta
\xrightarrow{T\to+\infty} \int_\R f(x) \g(dx).$}

\noi
\begin{itemize}
\item[(a)] $d_{\rm FM}( \nu^\o_T, \g) \xrightarrow[T \to +\infty]{\rm almost\,\, surely} 0,$
   with $d_{\rm FM}$ defined as in \eqref{FM};

\item[(b)] $\forall f\in C_b(\R)$, almost surely,
$\frac{1}{\log T} \int_1^T  \frac{1}{\theta} f(F_\theta) \,d\theta
\xrightarrow{T\to+\infty} \int_\R f(x) \g(dx);$
here $C_b(\R)$ is the set of real bounded continuous functions on $\R$.
In view of \cite[Proposition C.3.2]{blue}, the above statements are also equivalent to
 the following statements;
\item[(c)] almost surely, $\forall  t\in\R$,
$\frac{1}{\log T} \int_1^T \frac{1}{\theta} \ind_{\{ F_\theta \leq t\} } \, d\theta
\xrightarrow{T\to+\infty}  \gamma(  (-\infty, t]  );$

\item[(d)] $d_{\rm Kol}( \nu^\o_T, \g) \xrightarrow[T\to+\infty]{\rm almost\,\, surely} 0,$
    with $d_{\rm Kol}$ defined as in \eqref{KOL}.

\end{itemize}

 \end{remark}

\noi
The usual CLT does not require the random variables to be defined on a
common probability space, while the ASCLT does; and furthermore,
the convergence to normality at a  fast rate does not necessarily imply the ASCLT.
This can be readily confirmed by, for example, setting $F_k \equiv Y\sim\g$ for all $k$,
 leading to a clear violation of the ASCLT \eqref{def1}.
 In other words, a finer understanding of the whole family of random variables
 is necessary to investigate the ASCLT.

\subsection{SPDEs driven with L\'evy noises}\label{Sec_levy}

 Stochastic differential equations driven by L\'evy processes 
have been studied intensively in the literature since 1970, often using semi-martingale techniques. 
There are already several monographs dedicated to this topic (e.g.,
\cite{bichteler02, protter, applebaum09}).
The study of SPDEs driven by L\'evy noise is a relatively new area in stochastic analysis, 
which extends these techniques to problems that incorporate a spatially-dependent component 
for the noise. These equations can be studied using either the variational approach
(developed at length in the mongraph \cite{PZ07}), or the random field approach 
(see, e.g., \cite{BN15,BN16}). One needs to distinguish between the finite-variance case 
(in which many techniques are similar
to the Gaussian case), and the infinite-variance case (see, e.g., \cite{B14, kosmala-riedle22}). 

 The present paper constitues a new contribution to this area, and focuses on the 
hyperbolic Anderson model in dimension 1, driven by a finite-variance L\'evy noise.
This model can be used for describing the evolution of a wave perturbed by random
forces, which are characterized by a sequence of impulses in space-time 
(described rigorously by a Poisson random measure). Models with this type of
noise could be useful in a variety of situations, when the Gaussian space-time white noise is not 
a correct model for describing the sources of randomness perturbing the system.

Consider
the equation \eqref{SWE} driven
by a space-time L\'evy white noise $\dot{L}$. Throughout this paper, we make the 
following assumptions:


\noi
(i) $\dot{L}$ is the space-time (pure-jump) L\'evy noise on $\R_+\times\R$
with finite variance:
\begin{enumerate}
\item[(i-a)] let $\mathbf{Z} \coloneqq \R_+\times\R\times\R_0$, with $\R_0 \coloneqq \R\setminus\{0\}$ equipped with
the distance $d(x,y)= |x^{-1} - y^{-1}|$, and let $\mathcal{Z}$ be the Borel $\s$-algebra on $\mathbf{Z}$,
and let $m = {\rm Leb}\times   \nu$,
with $ {\rm Leb}$ the Lebesgue measure on $\R_+\times\R$ and
 $\nu$ a $\s$-finite measure on $\R_0$ satisfying
the {\it L\'evy-measure  condition}  $\int_{\R_0} \min\{1, |z|^2\} \nu(dz) <\infty$
and the {\it finite-variance condition} $m_2\coloneqq \int_{\R_0}|z|^2\nu(dz) <\infty$;

\item[(i-b)] let $N$ be a Poisson random measure on the space $(\mathbf{Z}, \mathcal{Z})$ with intensity $m$,
and let $\widehat{N} = N - m$ be the compensated version of $N$, and we put
$
L(A) = \int_{A\times \R_0} z \widehat{N}(ds, dy, dz)
$
for any Borel set $A\subset \R_+\times\R_0$ with   ${\rm Leb}(A) <\infty$,
where the above integral $L(A)$ is defined in the   It\^o sense, which is an {\it infinitely divisible}
random variable  with L\'evy-Khintchine formula
$
\E\big[ e^{i\lambda L(A)}\big]
= \exp\big( {\rm Leb}(A)\int_{\R_0}  (e^{i\lambda z}  - 1 - i\lambda z  ) \nu(dz) \big)
$, $\lambda \in\R$;

\item[(i-c)] $\dot{L}(s, y) = L(ds, dy)$ is the formal derivative $\partial_s \partial_y L$;
\end{enumerate}

\smallskip
\noi
(ii) the product of the unknown $u$ and the L\'evy noise $\dot{L}$ is intetpreted
in the It\^o sense;  and in Duhamel formulation, we rewrite \eqref{SWE}
in the following integral form:
\begin{align}\label{mild}
u(t,x) = 1 + \int_0^t \int_\R G_{t-s}(x-y) u(s,y) L(ds, dy),
\end{align} 
and $G_t(x)= \frac{1}{2}\ind_{\{ |x|< t\}}$ denotes the wave kernel.\footnote{The integral 
in \eqref{mild} 
is understood as 
$
\int_{\R\times\R} X(s, y) L(ds, dy) = \int_{\R\times\R\times\R_0} X(s, y)z \widehat{N}(ds, dy, dz)$,
whenever the right side exists.}

It is known that equation \eqref{SWE}  admits
 a unique solution $u = \{u(t,x)\}_{(t, x) \in \R_+ \times \R}$
 that is  an $\mathbb{F}$-predictable process satisfying \eqref{mild}
 and
 $
 \sup_{(t,x) \in [0,T] \times \R} \E [ | u(t,x)|^2  ] <\infty
 $
 for any finite $T > 0$; see \cite[Theorem 1.1]{BN16}.
 Here, $\mathbb{F}=\{ \F_t\}_{ t\in\R_+}$ denotes the natural filtration
 generated by the L\'evy noise $\dot{L}$.\footnote{More precisely,
let $\F^0_t$ be the $\s$-algebra
generated by the random variables
$N([0,s]\times A \times B)$ with $s \in [0,t]$ and ${\rm Leb}([0,s]\times A)+ \nu(B)<\infty$.
And let $\F_t = \s\big( \F_t^0 \cup \mathcal{N} \big)$ be the
$\s$-algebra generated by $\F_t^0$ and  the set $\mathcal{N} $ of $\PP$-null sets.
This gives us a filtration $\mathbb{F}=\{ \F_t\}_{t\in\R_+}$.
 }

In a recent work \cite{BZ23}, Balan and Zheng established  
the following  result on spatial ergodicity and CLT results 
for \eqref{SWE}.

\begin{theorem}[{\cite[Theorem 1.1]{BZ23}}]\label{thm-cls-clt}
Let $u = \{u(t,x)\}_{(t,x) \in \R_+ \times \R}$ be the solution to \eqref{SWE}. Fix $t_0 > 0$, and let $F_{\theta}$ and $\sigma_{\theta}$ be defined as in \eqref{FK} for $\theta > 0$. Then,
\begin{enumerate}[(i)]
\item[\rm(i)] $\{u(t_0, x)\}_{x \in \R}$ is strictly stationary and ergodic.

\item[\rm(ii)] $\displaystyle \sigma_{\theta} = \sqrt{{\rm Var} (F_{\theta})} \asymp \sqrt{\theta}$ as $\theta \to \infty$.

\item[\rm(iii)] Assume additionally that $m_{2 + 2\alpha} + m_{1 + \alpha} < \infty$ for some $\alpha \in (0,1]$,\footnote{In particular, this forces $m_2$ to be finite. \label{note3}} where
\begin{align}\label{def_mp}
m_p\coloneqq \int_{\R_0} |z|^p \nu(dz), \quad p \in[1,\infty).
\end{align}
Then, $
  {\rm dist} \big(\tfrac{F_{\theta}}{\sigma_{\theta}}, \mathcal{N}(0,1)\big) \lesssim \theta^{-\frac{\alpha}{1 + \alpha}},$
where the implicit constant  does not depend on $\theta$, and one can choose
the distributional metric {\rm dist} to be one of the following: Fortet-Mourier,
1-Wasserstein, and Kolmogorov distances {\rm(}see \eqref{FM}-\eqref{KOL}{\rm)}.
\end{enumerate}
\end{theorem}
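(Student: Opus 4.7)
The three parts involve quite different techniques, although all rest on the mild formulation \eqref{mild} and the It\^o isometry for the Lévy noise, which yields
$$\E\Big[\Big|\int_0^t\int_\R X(s,y)L(ds,dy)\Big|^2\Big] = m_2 \int_0^t\int_\R \E[X(s,y)^2]\,ds\,dy$$
for sufficiently integrable predictable $X$.

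For \textbf{part (i)}, strict stationarity follows from a standard pathwise-symmetry argument. Fix $h \in \R$ and let $L^{(h)}(A) := L(A - (0,h))$; then $L^{(h)} \stackrel{d}{=} L$ because the Lévy noise is spatially homogeneous. Substituting $x \mapsto x+h$ in \eqref{mild} and using the constancy of the initial condition shows that $\{u(t, x+h)\}_{(t,x)}$ solves the same equation driven by $L^{(h)}$, hence by uniqueness has the same law as $\{u(t,x)\}_{(t,x)}$. For ergodicity of $\{u(t_0,x)\}_{x \in \R}$, I would exploit the \emph{finite speed of propagation}: iterating \eqref{mild} in Picard form, $u(t_0, x)$ is measurable with respect to the $\s$-algebra generated by $L$ restricted to the (bounded) light cone $\{(s,y): 0 \le s \le t_0,\, |y - x| \le t_0 - s\}$. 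Since $L$ has independent values on disjoint regions, $u(t_0,x)$ and $u(t_0, x')$ are in fact \emph{independent} whenever $|x - x'| > 2t_0$. This immediately yields strong mixing, hence ergodicity, of the spatial stationary field.

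For \textbf{part (ii)}, the It\^o isometry applied to $F_\theta = \int_0^{t_0}\int_\R \bigl(\int_{-\theta}^\theta G_{t_0 - s}(x - y)\,dx\bigr) u(s,y)\,L(ds,dy)$ gives
$$\s_\theta^2 = m_2 \int_0^{t_0}\!\!\int_\R V(s)\, J_\theta(s,y)^2 \, dy\, ds, \qquad V(s):= \E[u(s,0)^2],\quad J_\theta(s,y) := \int_{-\theta}^\theta G_{t_0-s}(x-y)\,dx.$$
Stationarity makes $V(s)$ independent of the spatial variable; a Grönwall argument in \eqref{mild} gives $V(s) \le C(t_0)$, while the It\^o isometry applied again gives $V(s) \ge 1$. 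A direct computation shows $J_\theta(s,y) = t_0 - s$ for $|y| \le \theta - (t_0 - s)$ and $J_\theta(s,y) = 0$ for $|y| \ge \theta + (t_0-s)$, with a short linear transition, so $\int_\R J_\theta(s,y)^2\,dy = 2\theta(t_0-s)^2 + O(1)$. Integrating in $s$ yields $\s_\theta^2 \asymp \theta$.

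For \textbf{part (iii)}, the natural tool is a Malliavin--Stein inequality for Poisson functionals (in the spirit of Peccati--Solé--Taqqu--Utzet and Last--Peccati). With the add-one-jump difference operator $D_{s,y,z}F = F(\o + \dl_{(s,y,z)}) - F(\o)$, one has for any centered $F$ with finite chaos expansion
$$\text{dist}\Big(\tfrac{F}{\s},\mathcal{N}(0,1)\Big) \lesssim \frac{1}{\s^2}\sqrt{\text{Var}\bigl(\langle DF, -DL^{-1}F\rangle_{L^2(m)}\bigr)} + \frac{1}{\s^3}\int \E[|D_{s,y,z}F|^3]\,m(ds,dy,dz).$$
Differentiating the mild equation gives a (random) integral representation $D_{s,y,z}u(t,x) = z G_{t-s}(x-y) u(s-,y) + (\text{It\^o integral remainder})$, from which one derives, by $L^p$-estimates on $u$ (here the moment condition $m_{2+2\alpha}$ is used to bootstrap $L^{2+2\alpha}$-bounds on $u$), sharp size estimates for $\int_{-\theta}^\theta D_{s,y,z}u(t_0,x)\,dx$. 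The contraction-type term and the jump-size-cubed term are then controlled by combining these $L^p$ bounds with the kernel estimate $\int_\R J_\theta(s,y)^2\,dy = O(\theta)$; the exponent $\alpha/(1+\alpha)$ should emerge from a Hölder interpolation between the $L^2$ isometry and the $L^{2+2\alpha}$ moment bounds on the jump noise. The \emph{main obstacle} is bookkeeping this interpolation so that the third-moment Poisson term, which a priori scales like $\theta^{-1/2}$, is further reduced to $\theta^{-\alpha/(1+\alpha)}$ by effective use of the additional $(1+\alpha)$-integrability of $\nu$; the stationarity and the translation-invariant structure of $u$ are crucial to keep the spatial integrals uniform in $y$.
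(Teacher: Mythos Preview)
This theorem is quoted from \cite{BZ23} and is not proved in the present paper, so I compare your sketch against what can be inferred about the argument in \cite{BZ23} from the material recorded here (Propositions \ref{prop:BZ23} and \ref{prop:tara} and the estimates \eqref{gamma1b}--\eqref{est3}).

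Your treatment of (i) and (ii) is correct and matches the standard route: stationarity from spatial homogeneity of $L$ plus uniqueness, ergodicity from the finite light cone (which yields genuine independence of $u(t_0,x)$ and $u(t_0,x')$ once $|x-x'|>2t_0$), and $\s_\theta^2\asymp\theta$ from the It\^o isometry together with the explicit shape of $J_\theta$.

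For (iii) there is a genuine gap. The inequality you write is the first-order Peccati--Sol\'e--Taqqu--Utzet bound, and two things go wrong. First, the term $\s^{-3}\int \E[|D_\xi F|^3]\,m(d\xi)$ needs $m_3<\infty$, i.e.\ $\al=1$; for general $\al\in(0,1]$ one must use Trauthwein's refinement (Proposition \ref{prop:tara}) with $\int \E[|D_\xi F|^{q+1}]\,m(d\xi)$ for $q\in(1,2]$. Incidentally, your ``main obstacle'' is stated backwards: when $m_3<\infty$ the jump term already gives $\theta^{-1/2}\leq \theta^{-\al/(1+\al)}$, so nothing further is needed there. Second, and more importantly, you offer no mechanism to control ${\rm Var}\big(\langle DF,-DL^{-1}F\rangle\big)$. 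In \cite{BZ23} this is handled via the \emph{second-order} Poincar\'e inequality of Last--Peccati--Schulte/Trauthwein, which replaces the variance term by the quantities $\g_1,\g_2$ in \eqref{tara2} built from $\|D_{\xi_1}D_{\xi_2}F\|_{2p}$. That is precisely why Proposition \ref{prop:BZ23} records the bound \eqref{MD2} on the \emph{second} Malliavin derivative of $u$; the rate $\theta^{-\al/(1+\al)}$ is produced by $\g_1,\g_2$ (see \eqref{gamma1b} and \eqref{est2}), not by any H\"older interpolation on the jump term. Your sketch never introduces $D^2u$ or $D^2F_\theta$, and without them the first term in your Stein bound cannot be closed.
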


Note that $\s_\theta > 0$ for $\theta > 0$; see  \cite[Remark 4.3]{BZ23}.
One of the key ingredients for part (iii) is a second-order
Poincar\'e inequality that goes back to  Last, Peccati, and Schulte's paper
\cite{LPS16},
and has been recently improved by Trauthwein \cite{Tra22}.

The goal of the current note is
to provide the following ASCLT.

\begin{theorem} \label{main}
With the notation as  in Theorem \ref{thm-cls-clt} and
let $\wt{F}_\theta =   F_\theta/\s_\theta$ for $\theta > 0$.
Assume $m_{1+\al} + m_{2+2\al} < \infty$
for some $\al\in(0, 1]$.
Then, $\{\wt F_\theta\}_{\theta \geq 1}$
satisfy the {\rm ASCLT}.
\end{theorem}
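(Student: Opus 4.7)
I plan to follow the Ibragimov--Lifshits strategy. By the separating-class reformulation in Remark~\ref{rem1}(ii), the ASCLT~\eqref{def2} reduces to the almost-sure convergence, for each fixed $t\in\R$,
\[
\frac{1}{\log T}\int_1^T \frac{1}{\theta}\, e^{it\wt F_\theta}\, d\theta \xrightarrow{T\to+\infty} e^{-t^2/2}.
\]
Since $\phi_\theta(t):=\E[e^{it\wt F_\theta}]\to e^{-t^2/2}$ by Theorem~\ref{thm-cls-clt}(iii), it is enough to show that
\[
g_T(t) := \frac{1}{\log T}\int_1^T \frac{1}{\theta}\bigl(e^{it\wt F_\theta}-\phi_\theta(t)\bigr)\, d\theta
\]
tends to $0$ almost surely. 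A Borel--Cantelli argument along $T_n=\exp(n^\kappa)$ (for some $\kappa>1$), together with a standard monotonicity-in-$T$ interpolation, further reduces this to the quantitative estimate $\E|g_T(t)|^2 \les t^2/\log T$, locally uniform in $t$.

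Expanding the modulus square gives
\[
\E|g_T(t)|^2 = \frac{1}{(\log T)^2}\int_1^T\!\!\int_1^T \frac{K(\theta,\theta';t)}{\theta\theta'}\, d\theta\, d\theta', \qquad K(\theta,\theta';t):={\rm Cov}\bigl(e^{it\wt F_\theta},\, e^{it\wt F_{\theta'}}\bigr),
\]
so the heart of the proof is the decorrelation bound
\[
|K(\theta,\theta';t)| \;\les\; t^2\, \sqrt{\min(\theta,\theta')\big/\max(\theta,\theta')},
\]
from which the desired $(\log T)^{-1}$ rate drops out by elementary integration.

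To prove this bound I would exploit the martingale structure of $\dot L$ through the Clark--Ocone formula for Poisson functionals, which writes every centered $F\in L^2(\PP)$ as $F = \int \E[D_{s,y,z}F\,|\,\F_s]\, \ft N(ds,dy,dz)$, with $D$ the Malliavin difference operator. It\^o's isometry then yields
\[
K(\theta,\theta';t) = \int \E\!\left[\E[D_{s,y,z}e^{it\wt F_\theta}|\F_s]\cdot\overline{\E[D_{s,y,z}e^{it\wt F_{\theta'}}|\F_s]}\right] m(ds,dy,dz).
\]
Two observations now combine. First, $|e^{ia}-e^{ib}|\leq|a-b|$ and Jensen give $\E|\E[D_{s,y,z}e^{it\wt F}|\F_s]|^2 \leq t^2\, \E|\E[D_{s,y,z}\wt F|\F_s]|^2$. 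Second, the wave kernel $G_t(x)=\tfrac12\ind_{\{|x|<t\}}$ has compact support, and iterating \eqref{mild} shows $D_{s,y,z}u(t_0,x)=0$ whenever $|y-x|>t_0-s$; hence, assuming $\theta'\leq\theta$, $D_{s,y,z}F_{\theta'}$ vanishes outside the truncated backward cone $A_{\theta'}:=\{|y|\leq \theta'+t_0-s\}$. Restricting the $m$-integral to $A_{\theta'}$, applying Cauchy--Schwarz twice, and combining the Clark--Ocone identity $\int \E|\E[D_{s,y,z}\wt F_{\theta'}|\F_s]|^2\, m = 1$ with its spatially localized analogue $\int_{A_{\theta'}} \E|\E[D_{s,y,z}\wt F_\theta|\F_s]|^2 \, m \les \theta'/\theta$ (via spatial stationarity from Theorem~\ref{thm-cls-clt}(i) and $\s_\theta^2\asymp\theta$ from Theorem~\ref{thm-cls-clt}(ii)), yields the required exponent $1/2$.

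The main obstacle is precisely this localized Malliavin bound: one must show that the density $y\mapsto \E|\E[D_{s,y,z}F_\theta|\F_s]|^2$ is $O(1)$ in the bulk uniformly in $\theta$, so that truncating to $\{|y|\leq\theta'+t_0\}$ genuinely produces a factor $\theta'$ rather than the trivial $O(\theta)$. I would derive this from an iterative Malliavin estimate on the mild formulation \eqref{mild}, controlled by the hypothesis $m_{1+\al}+m_{2+2\al}<\infty$ and the uniform second-moment bound on $u$ from \cite{BN16}, in the same spirit as the quantitative CLT of \cite{BZ23}. A second, self-contained route (hinted at in the abstract) bypasses the martingale representation entirely: one feeds Trauthwein's second-order Poisson Poincar\'e inequality \cite{Tra22} into the Ibragimov--Lifshits criterion, bounding $K(\theta,\theta';t)$ directly through products of first- and second-order Malliavin operators. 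This second approach avoids It\^o calculus and is the natural candidate for extensions to noises colored in time, where the Clark--Ocone identity is unavailable.
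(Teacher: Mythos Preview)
Your argument is essentially the paper's \emph{first} proof (Section~\ref{ss_prf1}), not its Ibragimov--Lifshits proof. The core decorrelation bound $|{\rm Cov}(e^{it\wt F_\theta},e^{it\wt F_{\theta'}})|\les t^2\sqrt{\theta'/\theta}$ is the paper's estimate \eqref{com3} specialized to $f(x)=e^{itx}$, obtained there more directly: once Clark--Ocone, It\^o isometry, and the chain rule give \eqref{com1}, one simply plugs in the pointwise bound $\|D_{r,y,z}F_\theta\|_2\les|z|\int_{-\theta}^\theta G_{t_0}(x-y)\,dx$ from Proposition~\ref{prop:BZ23} and integrates. There is no need for your light-cone localization or stationarity detour, and your appeal to the identity $\int\E|\E[D\wt F_{\theta'}|\F_s]|^2\,m=1$ is actually not available as written (the bound $|De^{it\wt F}|\le|t||D\wt F|$ forces an absolute value inside the conditional expectation, destroying the exact Clark--Ocone isometry); the direct computation avoids this altogether.

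There is a genuine gap in your reduction step. Remark~\ref{rem1}(ii) does \emph{not} reduce the ASCLT to ``for each fixed $t\in\R$, almost surely, $\frac{1}{\log T}\int_1^T\theta^{-1}e^{it\wt F_\theta}d\theta\to e^{-t^2/2}$''; its separating class consists of bounded Lipschitz functions, and pointwise-in-$t$ almost-sure convergence of characteristic functions does not yield weak convergence without an additional tightness argument (this is precisely part (ii) of the appendix proof of Proposition~\ref{prop:IL}). The paper's first proof sidesteps this by testing against bounded Lipschitz $f$ from the outset---the only property of $e^{it\cdot}$ you actually use---and then invoking Lemma~\ref{lem1}.

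For comparison, the paper's genuine Ibragimov--Lifshits proof (Section~\ref{ss_prf2}) is structurally different from what you outline in your last paragraph. It uses the criterion in Proposition~\ref{prop:IL}, decomposes $\E|\K_t(s)|^2$ into two pieces, and controls the hard piece by bounding $d_{\rm Wass}\bigl((\wt F_\theta-\wt F_\w)/\sqrt2,\,\mathcal N(0,1)\bigr)$ via the second-order Poincar\'e inequality (Proposition~\ref{prop:tara}) applied to the \emph{difference} $\wt F_\theta-\wt F_\w$. The non-unit-variance formulation of Proposition~\ref{prop:tara} is essential there, since the variance of the normalized difference degenerates as $\theta\to\w$.
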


The paper is organized as follows. In Section \ref{sec_pre}, we collect some preliminaries
 for the proofs of Theorem \ref{main} presented
in Section \ref{S2}. Proofs of technical lemmas will be given in the Appendix \ref{APP}.

\section{Preliminaries}\label{sec_pre}

\subsection{Notations}
 In this paper, all random objects are defined on a rich common probability space
$(\Omega, \mathcal{F},\PP)$, and $\E$, ${\rm Var}, {\rm Cov}$ stand for the associated expectation, variance, covariance
operators respectively. We use  $\|X\|_p \coloneqq \| X\|_{L^p(\Omega)}  $
for any $p\in[1,\infty)$ and real-valued random variable $X$,  and we write
$\| f \|_\infty$ for the essential-sup norm for any measurable $f:\R\to\R$.
 We write $Y\sim\g$ to mean that $Y$ is a standard
 normal random variable.
 For any two (probability) measures $\mu_1$
 and $\mu_2$ on $\R$, we define the Fortet-Mourier distance

 \noi
 \begin{align}\label{FM}
 d_{\rm FM}(\mu_1, \mu_2) \coloneqq \sup_{\|\phi\|_{\infty} + \|\phi\|_{\infty} < 1}  \Big| \int_\R  \phi(x) \mu_1(dx) -\int_\R  \phi(x) \mu_2(dx) \Big| .
 \end{align}
 
 \noi
 It is well-known that $d_{\rm FM}$ characterizes
 the weak convergence of probability measures on $\R$;
 see \cite[Theorem 11.3.3]{Dudley}.
Another two stronger metrics, the Wasserstein distance $d_{\rm Wass}$
and the Kolmogorov distance  $d_{\rm Kol}$, are defined, respectively,
 by
 \begin{align}\label{Wass}
 d_{\rm Wass}(\mu_1, \mu_2)
 &\coloneqq  \sup_{\|\phi'\|_\infty\leq 1}  \Big| \int_\R  \phi(x) \mu_1(dx) -\int_\R  \phi(x) \mu_2(dx) \Big| ,\\
 d_{\rm Kol}(\mu_1, \mu_2)
 &\coloneqq \sup_{t\in\R} |\mu_1(  (-\infty, t]  ) - \mu_2(  (-\infty, t]  ) |.
 \label{KOL}
 \end{align}

\noi
 For two random variables $X$ and $Y$, we also write $d_{\rm Wass}(X, Y)$
 for the Wasserstein distance between their distributions.
On the $L^2$ probability space generated by the Poisson random measure $N$,
one can develop the Malliavin calculus; see \cite[Section 2]{BZ23}
for more details and for any unexplained notation.
In particular, we recall that  for $F\in \mathbb{D}^{1,2}$,
$D_\xi F$
denotes the Malliavin derivative $DF\in L^2(\Omega; L^2(\mathbf{Z}, m))$
valued at $\xi\in \mathbf{Z}$, where for convenience,  $\xi$ and $m(d\xi)$ are short for $\xi = (r, y, z)\in\R_+\times\R\times\R_0$
and $m(d\xi) = dr \times dy \times \nu(dz)$ respectively,
whenever no ambiguity arises. For two functions $g(t)$ and $h(t)$,
$g(t)\asymp h(t)$ means that
$0 < c_1 \leq \liminf_{t\to+\infty} g(t)/h(t) \leq   \limsup_{t\to+\infty} g(t)/h(t) \leq c_2 <\infty$
for some constants $c_1$ and $c_2$.


\subsection{Preliminary results} 

\begin{proposition}[{\cite[Proposition 3.2]{BZ23}}] \label{prop:BZ23} Let $m_p$ be defined as in \eqref{def_mp} for $p \geq 1$.
Assume   $m_2 + m_{2+2\al} < \infty$ for some $\alpha \in (0,1]$.
Then, for any $0 < r_1 < r_2 < t_0$,
\begin{gather}
\| D_{r_1, y, z} u(t_0, x) \|_{2+2\al} \les G_{t_0 -r_1}(x-y) |z|,   \nonumber 
\shortintertext{and}
\begin{aligned}
\| D_{r_2, y_2, z_2}  D_{r_1, y_1, z_1} u(t_0, x) \|_{2+2\al}
\les    |z_1 z_2| G_{t_0 - r_2} (x - y_2) G_{r_2 - r_1} (y_2 - y_1). 
 \end{aligned}
\label{MD2}
\end{gather}


\end{proposition}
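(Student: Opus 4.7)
The plan is to derive an integral equation for $D_{r_1, y, z} u(t_0, x)$ directly from the mild formulation \eqref{mild} and then iterate it in $L^{2+2\al}$ while preserving the wave-cone factor $G_{t_0-r_1}(x-y)$. Since $D_\xi$ on the Poisson space is the add-one-cost (difference) operator, I would apply it to \eqref{mild} and use predictability of the integrand (the noise up to time $r_1$ does not see the extra jump at $(r_1, y, z)$, so $D_{r_1,y,z} u(r_1, y) = 0$) to obtain
\begin{align*}
D_{r_1, y, z} u(t_0, x) = z\, G_{t_0-r_1}(x-y)\, u(r_1, y) + \int_{r_1}^{t_0}\int_\R G_{t_0-s}(x - y')\, D_{r_1, y, z} u(s, y')\, L(ds, dy').
\end{align*}

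Set $p := 2+2\al$. I would take $L^p$ norms and invoke Kunita's Rosenthal-type inequality for pure-jump martingales, which under $m_2 + m_p < \infty$ dominates $\|\int X\, dL\|_p$ by $\sqrt{m_2}\,(\int \|X\|_p^2)^{1/2} + m_p^{1/p}\,(\int \|X\|_p^p)^{1/p}$. Writing $\phi(s, y') := \|D_{r_1,y,z} u(s,y')\|_p / |z|$ and using the uniform bound $\sup_{s\leq t_0,\, y\in\R}\|u(s,y)\|_p < \infty$ (itself a consequence of the same Picard argument applied to \eqref{mild}), the integral equation above reduces to the Volterra inequality
\begin{align*}
\phi(t_0, x) \les G_{t_0 - r_1}(x-y) + \Bigl( \int_{r_1}^{t_0} \!\int_\R G_{t_0-s}^2 \phi^2 \, ds\, dy' \Bigr)^{1/2} + \Bigl( \int_{r_1}^{t_0} \!\int_\R G_{t_0-s}^p \phi^p \, ds\, dy' \Bigr)^{1/p}.
\end{align*}
Because the $1$D wave kernel enjoys the pointwise comparison $G_t^q \asymp G_t$ for every $q\geq1$ and the light-cone identity $\int_{r_1}^{t_0}\int_\R G_{t_0-s}(x-y') G_{s-r_1}(y'-y)\, dy'\, ds \les (t_0-r_1)^2\, G_{t_0-r_1}(x-y)$, the ansatz $\phi(s,y') \leq C\, G_{s-r_1}(y'-y)$ is self-consistent on any bounded time interval, and a Picard/Gronwall closure (or a time-subdivision if the contraction factor is too large) delivers the first bound.

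For the second estimate I would differentiate the equation above once more with $D_{r_2, y_2, z_2}$, taking $r_2 > r_1$ (the other case is symmetric). The same predictability argument kills the contribution from $D_{r_2, y_2, z_2} u(r_1, y_1)$, yielding
\begin{align*}
D_{r_2, y_2, z_2} D_{r_1, y_1, z_1} u(t_0, x) = z_2\, G_{t_0-r_2}(x-y_2)\, D_{r_1, y_1, z_1} u(r_2, y_2) + \int_{r_2}^{t_0}\!\int_\R G_{t_0-s}(x-y')\, D_{r_2,y_2,z_2} D_{r_1, y_1, z_1} u(s, y')\, L(ds, dy').
\end{align*}
Inserting the first bound $\|D_{r_1, y_1, z_1} u(r_2, y_2)\|_p \les |z_1|\, G_{r_2-r_1}(y_2-y_1)$ as the forcing term and rerunning the Kunita/Picard scheme closes the argument. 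The main obstacle is the non-linear $L^p$ term in Kunita's inequality: unlike its $L^2$ counterpart, it is not $2$-homogeneous in $\phi$, so one has to verify carefully that iterating it reproduces the single light-cone factor $G_{t_0-r_1}(x-y)$ rather than a strictly weaker power such as $G^{1/p}$. This is precisely where both the identity $G_t^q \asymp G_t$ for the $1$D wave kernel and the hypothesis $m_{2+2\al} < \infty$ are essential, as they allow the $L^p$ space-time integrand $G^p \phi^p$ to collapse onto the same ansatz as the $L^2$ integrand without degrading the wave-cone geometry.
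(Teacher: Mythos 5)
This proposition is not proved in the present paper at all --- it is quoted from \cite{BZ23} (Proposition 3.2 there) --- and your proposal reconstructs essentially the argument of that source: apply the add-one-cost operator to the mild equation \eqref{mild}, bound the stochastic-integral term by a Rosenthal/Kunita-type inequality (which requires exactly $m_2+m_{2+2\al}<\infty$), and close the estimate by a Picard/Gronwall iteration exploiting the light-cone support of $Du$ and $D Du$ and the fact that $G_t^q\asymp G_t$ for the one-dimensional wave kernel, then repeat with the first-order bound as forcing term for the second derivative. Your outline is correct; the only steps needing routine care, handled the same way in the cited proof, are the rigorous justification of the integral equations for $D_{r_1,y_1,z_1}u$ and $D_{r_2,y_2,z_2}D_{r_1,y_1,z_1}u$ (including the null-set argument killing the boundary terms at times $r_1,r_2$) and the a priori finiteness needed before invoking Gronwall, which is most safely obtained by proving the bounds uniformly for the Picard iterates and passing to the limit.
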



The following proposition is a variant of {\cite[Theorem 3.4]{Tra22}} for  random variables
whose  variances are not necessarily one. The modification  is   standard within the Stein's method 
 (\cite{Ross11, CGS, blue}).
 For the completeness, we provide a sketchy proof.

First, we recall the framework used in \cite{Tra22}. 
Let $(\bX,\mathcal{X},\lambda)$ be a $\sigma$-finite measure space, and ${\bf N}_{\bX}$ be the set of $\mathbb{N}_0 \cup\{\infty\}$-valued measures on $\bX$. Let $\mathcal{N}_{\bX}$ be the smallest $\sigma$-field on ${\bf N}_{\bX}$ for which all maps ${\bf N}_{\bX} \ni \xi \mapsto \xi(B)$ are measurable
 for $B \in \mathcal{X}$.

Let $\chi$ be a Poisson random measure (PRM) on $\bX$ of intensity $\lambda$, defined on a probability space $(\Omega,\mathcal{F}, \mathbb{P})$. 
For any $\chi$-measurable random variable $F$, there exists a measurable function $f:\mathbb{N}_{\bX}\to \R$ such that $F=f(\chi)$ a.s. For such a variable, we define the add-one cost operator
$D_{x}F=f(\chi+\delta_x)-f(\chi)$ for all $x \in \bX$. 
It is known that for $F\in\mathbb{D}^{1,2}$, $D F$ coincides with the   add-one cost operator  
and hence we will simply use the same notation; see \cite[Remark 2.7]{BZ23}.
In our framework, $(\bX,\mathcal{X},\lambda)=(\mathbf{Z},\mathcal{Z},m)$.
If $\eta$ is a PRM on $\bY=\bX \times [0,1]$ of intensity $\overline{\lambda}=\lambda \otimes dt$, then $\chi=\eta(\cdot \times [0,1])$ is a PRM on $\bX$ of intensity $\lambda$, and for any $\chi$-measurable random variable $F$, $D_{(x,s)}F=D_x F$ for all $(x,s) \in \bY$.

\begin{proposition} 
\label{prop:tara}
Let $\chi$ be a Poisson random measure on $\bX$ of intensity $\lambda$. 
Let $F \in \mathbb{D}^{1,2}$ be such that $\E(F)=0$ and $\E(F^2)=\sigma^2$. Then,

\noi
\begin{align}\label{tara1}
d_{\rm Wass}(F, \mathcal{N}(0,1)) \leq \frac{2}{\sqrt{\pi}}|1 - \sigma^2 | + \g_1 + \g_2 + \g_3,
\end{align}

\noi
where, for  $p,q\in (1,2]$,

\noi
\begin{align}
\begin{aligned}
\g_1&\coloneqq  \frac{2^{\frac2p + \frac12} }{\sqrt{\pi}}
\bigg( \int_{\bX}  \bigg[ \int_{\bX}  \| D_{x_2}F \|_{2p}  \| D_{x_1}D_{x_2}F\|_{2p}  \,  \lambda(dx_2) \bigg]^p
\lambda(dx_1) \bigg)^{\frac1p} ,
\\
\g_2&\coloneqq\frac{2^{\frac2p - \frac12} }{\sqrt{\pi}}
\bigg( \int_{\bX}  \bigg[ \int_{\bX}   \| D_{x_1}D_{x_2}F\|^2_{2p}  \,  \lambda(d\xi_2) \bigg]^p
\lambda(dx_1) \bigg)^{\frac1p} ,
\\
\g_3&\coloneqq 2    \int_{\bX} \E|D_{x} F|^{q+1}  \, \lambda(dx).
\end{aligned}
\label{tara2}
\end{align}

\end{proposition}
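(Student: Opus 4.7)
The plan is to adapt the unit-variance bound of Trauthwein \cite[Theorem 3.4]{Tra22} through the standard Stein-method trick for handling a mismatch between $\E(F^2)$ and $1$. I would start from the Stein equation for the standard normal: for a test function $h$ with $\|h'\|_\infty\leq 1$, there is a bounded solution $f=f_h$ of $f'(x)-xf(x)=h(x)-\E h(Z)$, with $Z\sim \mathcal{N}(0,1)$, satisfying classical pointwise estimates on $\|f\|_\infty$, $\|f'\|_\infty$, $\|f''\|_\infty$. Hence, by the definition of $d_{\rm Wass}$,
\begin{equation*}
d_{\rm Wass}(F,\mathcal{N}(0,1))=\sup_{h}\bigl|\E[f'(F)-Ff(F)]\bigr|.
\end{equation*}

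The key algebraic step is the insertion of $\sigma^2$:
\begin{equation*}
\E[f'(F)-Ff(F)]=(1-\sigma^2)\,\E[f'(F)]+\E[\sigma^2 f'(F)-Ff(F)].
\end{equation*}
The first summand is controlled by $\|f'\|_\infty\,|1-\sigma^2|$; after the standard Stein bound on $f'$, this produces the $\tfrac{2}{\sqrt{\pi}}|1-\sigma^2|$ term in \eqref{tara1}. The second summand vanishes identically when $F$ is replaced by $\sigma Z$, so it is a genuine measure of the non-Gaussianity of $F$, and this is precisely the quantity whose estimation constitutes the core of \cite[Theorem 3.4]{Tra22}.

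To handle the second summand, I would invoke the Mecke-type Malliavin integration by parts for Poisson functionals, writing $\E[Ff(F)]=\E\bigl[\langle DF,-DL^{-1}F\rangle\,f'(F)\bigr]+R$, where the remainder $R$ comes from the discrete Taylor expansion of $f(F+D_xF)-f(F)$. Using $\E\langle DF,-DL^{-1}F\rangle=\sigma^2$, the leading contribution becomes
\begin{equation*}
\E\bigl[\sigma^2 f'(F)-Ff(F)\bigr]=\E\bigl[(\sigma^2-\langle DF,-DL^{-1}F\rangle)f'(F)\bigr]-R,
\end{equation*}
so the difference is centered. Cauchy--Schwarz then bounds this by $\|f'\|_\infty$ times $\sqrt{\mathrm{Var}(\langle DF,-DL^{-1}F\rangle)}$, which in turn is controlled by the second-order Poincar\'e-type estimate of Trauthwein in terms of $\g_1$ and $\g_2$. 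The remainder term $R$ is estimated pointwise using $\|f''\|_\infty$ and the cost increment $|D_xF|^{q+1}$, giving the contribution $\g_3$.

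I do not expect genuine technical obstacles here, since every estimate of \cite{Tra22} applies verbatim once $(1-\sigma^2)$ is extracted at the start; the only subtlety is bookkeeping, namely verifying that the $\g_i$ expressions and the multiplicative constants coincide exactly with those obtained in the unit-variance case. This follows because the rescaling is a scalar shift rather than a rescaling of $F$, so the norms $\|D_{x_1}F\|_{2p}$, $\|D_{x_1}D_{x_2}F\|_{2p}$, and $\E|D_xF|^{q+1}$ are unaffected.
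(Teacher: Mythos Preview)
Your strategy is essentially the same as the paper's: both extract the term $|1-\sigma^2|$ first and then feed the centered remainder into the internal estimates of Trauthwein's proof. The paper is a bit more economical in execution: rather than re-deriving from the Stein equation, it quotes Trauthwein's intermediate bound \cite[(3.3)]{Tra22}, which already reads
\[
d_{\rm Wass}(F,\mathcal{N}(0,1)) \leq \sqrt{2/\pi}\,\E\big|1-\sigma^2 - \mathbf{G}\big| + 2\int_{\bY} |\E[D_y F\mid\eta]|\,|D_y F|^{q}\,\overline{\lambda}(dy)
\]
with $\mathbf{G}=\int_{\bY} D_yF\,\E[D_yF\mid\eta]\,\overline{\lambda}(dy)-\sigma^2$ (this is exactly your $\langle DF,-DL^{-1}F\rangle-\sigma^2$ via the thinning representation), and observes that this step of \cite{Tra22} never uses $\E(F^2)=1$. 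A triangle inequality then peels off $|1-\sigma^2|$, and Trauthwein's later estimates (D.18), (D.46), (D.47) carry over verbatim to bound $\E|\mathbf{G}|$ and the remainder by $\g_1+\g_2$ and $\g_3$.

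One step in your sketch should be corrected: do not pass through Cauchy--Schwarz and $\sqrt{\mathrm{Var}(\langle DF,-DL^{-1}F\rangle)}$. Trauthwein's key estimate is an $L^1$ bound $\E|\mathbf{G}|\leq \beta_1'+\beta_2'$ obtained via a $p$-Poincar\'e inequality for general $p\in(1,2]$; routing through the variance would only recover the $p=2$ case of Last--Peccati--Schulte \cite{LPS16} and would not produce $\g_1,\g_2$ as stated for $p<2$. The correct (and simpler) step is $|\E[(\sigma^2-\langle DF,-DL^{-1}F\rangle)f'(F)]|\leq \|f'\|_\infty\,\E|\sigma^2-\langle DF,-DL^{-1}F\rangle|$, after which Trauthwein's $L^1$ estimate applies directly.
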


\begin{proof}

Let $\eta$ be given as above, then we can view $F$ as a $\eta$-measurable random variable $F$
with $D_{y}F=D_x F$ for all $y = (x,s) \in \bY$.

We use \cite[(3.3)]{Tra22} (whose proof does not rely on the fact that $\E(F^2)=1$):
\begin{align}\label{taraT1}
d_{\rm Wass}(F, \mathcal{N}(0,1)) 
& \leq  \sqrt{2/\pi}  \E \left|1- \sigma^2 -  \mathbf{G} \right|+2 \int_{\bY} |E[D_y F|\eta] | \cdot |D_y F|^q \overline{\lambda}(dy),
\end{align} 
where
$
\mathbf{G}:=\int_{\bY} D_{y}F \, \E[D_y F|\eta] \overline{\lambda}(dy)-\sigma^2
$
  has mean zero in view of  \cite[(D.20)]{Tra22},
 and the second term in \eqref{taraT1} is bounded by $\gamma_3$ 
 in view of \cite[(D.47)]{Tra22}.   
 The same argument as for \cite[(D.18)]{Tra22} shows that
$\E|\mathbf{G}| \leq \beta_1'+\beta_2'$,
where $\beta_1'$ and $\beta_2'$ have the same expressions as $\beta_1,\beta_2$ in \cite[Theorem 3.2]{Tra22}, {\em but without} $\sigma^{-2}$, and without $\sqrt{2/\pi}$ (which is a typo in \cite{Tra22}). Hence,
the above Wasserstein distance \eqref{taraT1} is bounded by 
$\sqrt{2/\pi}  |1-\sigma^2|+  \sqrt{2/\pi}   \E|\mathbf{G}|+\gamma_3
$.
Then $\sqrt{2/\pi} (\beta_1'+\beta_2')=\beta_1+\beta_2$, where $\beta_1$ and $\beta_2$ are given as in \cite[Theorem 3.2]{Tra22} without the factor $\sigma^{-2}$. The conclusion follows since $\beta_1+\beta_2 \leq \gamma_1+\gamma_2$, by \cite[D.46]{Tra22}.
\end{proof}

\begin{remark}\rm
Proposition \ref{prop:tara} will be combined with Ibragimov-Lifshits' method of characteristic functions (Proposition \ref{prop:IL}) to provide the second proof of Theorem \ref{main} in Section \ref{ss_prf2}.
It is important to note that  $F$ in \eqref{tara1} may not
have the unit variance, whereas the original statement of \cite[Theorem 3.4]{Tra22} gives an estimate 
for $d_{\rm Wass}(\widehat{F},Y)$ with $\widehat{F}=(F-\E (F))/\sqrt{{\rm Var}(F)}$.
This seemingly minor modification is crucial for obtaining \eqref{claima}: applying directly \cite[Theorem 3.4]{Tra22}
would require a uniform (positive) lower bound for the variance $V_{\theta, \w}$ in \eqref{VTW}, 
but  $V_{\theta, \theta} =0$. 
\end{remark}

\section{Two proofs of Theorem \ref{main}} \label{S2}

\subsection{Proof of Theorem \ref{main} via the Clark-Ocone formula}
\label{ss_prf1}

In view of Remark \ref{rem1}-(ii), it suffices to show

\noi
\begin{align}\label{show1}
\frac{1}{\log T} \int_1^T \frac{1}{\theta} f( \wt{F}_\theta) d\theta
\xrightarrow[T\to +\infty]{a.s.}
\int_\R f(x)\g(dx)
\end{align}
for any bounded Lipschitz function $f$ on $\R$ with the Lipschitz constant ${\rm Lip}(f)$.
By Theorem \ref{thm-cls-clt}-(iii),
$\lim_{\theta \to \infty}\E [f(\wt{F}_\theta)] = \int_\R f(x)\g(dx)$.
Then, \eqref{show1} is equivalent to

\noi
\begin{align}\label{show2}
\frac{1}{\log T} \int_1^T \frac{1}{\theta} \big(  f(\wt{F}_\theta) - \E[ f(\wt{F}_\theta) ] \big)  d\theta
\xrightarrow[T\to+\infty]{a.s.}
0.
\end{align}
Put
\begin{align}\label{def:h}
H_\theta\coloneqq f(\wt{F}_\theta) - \E[ f(\wt{F}_\theta)].
\end{align}

\noi
Then, one can deduce from the Clark-Ocone
formula (cf. \cite[Lemma 2.5]{BZ23}) that

\noi
\begin{align}\label{com0}
H_\theta = \int_0^{t_0}\int_\R\int_{\R_0} \E\big[ D_{r,y,z} H_\theta | \mathcal{F}_r \big]
\widehat{N}(dr, dy, dz),
\end{align}

\noi
which is an It\^o integral with respect to the compensated Poisson random measure
$\widehat{N}$.
Then, we deduce from the It\^o isometry, 
the chain rule (e.g. \cite[(2.42)]{BZ23}), the Cauchy-Schwarz, and Jensen's inequalities
   that
 for $\theta < \w$,

 \noi
 \begin{align}
\big|  \E [ H_\theta H_\w  ] \big|
&\leq  \frac{ {\rm Lip}^2(f)}{\s_\theta \s_\w}  \int_0^{t_0} \int_\R\int_{\R_0} \| D_{r, y, z } F_\theta \|_2
\cdot  \| D_{r, y, z } F_\w \|_2 \,  dr  dy \nu(dz);
\label{com1}
 \end{align}
 while Proposition \ref{prop:BZ23}, together with Minkowski's inequality,
 implies that

 \noi
 \begin{align}
  \| D_{r, y, z } F_\theta \|_2
\leq  \int_{-\theta}^\theta  \| D_{r, y, z} u(t_0,  x) \|_2 \, dx\les |z|  \int_{-\theta}^\theta  G_{t_0 -r}(x-y) dx.
  \label{com2}
 \end{align}
 Thus, it follows from \eqref{com1}, \eqref{com2}, Theorem \ref{thm-cls-clt}-(ii), 
 and the following inequalities
 \[
 G_{t_0 - r}(\bul) \leq G_{t_0}(\bul),
 \quad
 G_{t_0}(x_1 - y) G_{t_0}(x_2-y) \leq  G_{t_0}(x_1 - y) G_{2t_0}(x_1 - x_2)
 \]
 that
 \begin{align}
 \begin{aligned}
 \big|  \E [ H_\theta H_\w  ] \big|
& \les
 \frac{1}{\sqrt{ \theta \w} }
 \int_0^{t_0}   \int_\R\int_{\R_0} |z|^2 \left(  \int_{-\theta}^\theta  G_{t_0 -r}(x_1-y) dx_1  \right) \\
 &\qquad\qquad\quad \cdot  \left(  \int_{-\w}^\w  G_{t_0 -r}(x_2-y) dx_2  \right)  drdy  \nu(dz) \\
 &\leq  \frac{m_2 t_0 }{\sqrt{ \theta \w} }
   \int_\R  \left(  \int_{-\theta}^\theta  G_{t_0}(x_1-y) dx_1  \right) \cdot 
    \left(  \int_{-\w}^\w  G_{2t_0}(x_2-x_1) dx_2  \right)  dy \\
   &\leq   4m_2 t^3_0  \big( \theta / \w \big)^{\frac12}
   \qquad\text{for $\theta < \w$,}
 \end{aligned}
 \label{com3}
 \end{align}
 where the last step is obtained by integrating in the order of $dx_2, dy$, then $dx_1$.
 Hence, the proof  of \eqref{show2} is done by invoking the following Lemma \ref{lem1}. \qed


\begin{lemma}\label{lem1}
 Let $\{ H_\theta\}_{\theta > 0}$ be a family uniformly bounded random variables with
 \[
 \big| \E[ H_\theta H_\w ] \big| \leq C_\beta  \big( \theta / \w \big)^{\beta}
 \]
 for any $0 < \theta < \w$, 
 where the exponent $\beta > 0$ and the constant $C_\beta$
 do not depend on $(\theta, \w)$. 
 Assume also that $\theta\mapsto H_\theta$ is a measurable
 function almost surely.
 Then,

 \noi
 \begin{align} \label{com4}
L_T\coloneqq \frac{1}{\log T} \int_1^T \frac{1}{\theta} H_\theta d\theta \xrightarrow[T\to+\infty]{a.s.} 0.
 \end{align}
 \end{lemma}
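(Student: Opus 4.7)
The natural strategy is to show that $\E[L_T^2]\to 0$ at a rate fast enough to invoke Borel--Cantelli along a suitable subsequence, and then use the uniform boundedness assumption to control the oscillation between consecutive terms of the subsequence.

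\textbf{Step 1: second moment estimate.} By Fubini (justified by boundedness of $H_\theta$ and almost-sure measurability of $\theta\mapsto H_\theta$), symmetry, and the hypothesis,
\begin{align*}
\E[L_T^2]
&= \frac{2}{(\log T)^2}\int_1^T\int_1^\omega \frac{\E[H_\theta H_\omega]}{\theta\omega}\,d\theta\,d\omega \\
&\leq \frac{2C_\beta}{(\log T)^2}\int_1^T \frac{1}{\omega^{1+\beta}}\int_1^\omega \theta^{\beta-1}\,d\theta\,d\omega \leq \frac{2C_\beta}{\beta \log T}.
\end{align*}
So $\E[L_T^2]\lesssim (\log T)^{-1}$, which already gives $L_T\to 0$ in $L^2(\PP)$ but not yet almost surely.

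\textbf{Step 2: almost-sure convergence along a subsequence.} Fix any $a>1$ and set $T_n:=\exp(n^a)$. Then $\E[L_{T_n}^2]\lesssim n^{-a}$, so $\sum_{n\geq 1}\E[L_{T_n}^2]<\infty$. By the Borel--Cantelli lemma (applied to the events $\{|L_{T_n}|>\varepsilon\}$), one obtains $L_{T_n}\xrightarrow{a.s.} 0$.

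\textbf{Step 3: interpolation between the subsequence points.} Let $M:=\sup_\theta \|H_\theta\|_\infty<\infty$. For $T\in[T_n, T_{n+1}]$, write
\begin{equation*}
L_T = \frac{\log T_n}{\log T}\,L_{T_n} + \frac{1}{\log T}\int_{T_n}^T \frac{H_\theta}{\theta}\,d\theta,
\end{equation*}
so almost surely
\begin{equation*}
|L_T| \leq |L_{T_n}| + M\,\frac{\log(T_{n+1}/T_n)}{\log T_n}.
\end{equation*}
Since $\log T_n = n^a$ and $\log(T_{n+1}/T_n)=(n+1)^a-n^a\sim a n^{a-1}$, the second term is $O(n^{-1})\to 0$. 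Combined with Step 2, this yields $L_T\xrightarrow{a.s.} 0$, proving \eqref{com4}.

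\textbf{Remarks on the approach.} No step is genuinely delicate; the exponent $\beta>0$ in the covariance decay is exactly what makes the double integral in Step 1 convergent uniformly in $T$, up to the $(\log T)^{-1}$ loss that is harmless for a super-polynomial subsequence. The main (mild) obstacle is the measurability issue for the random integral, which is granted by hypothesis. The same argument works for the discrete analogue with $L_N = (\log N)^{-1}\sum_{k=1}^N k^{-1}H_k$, and more generally for the weighted averages described in Remark \ref{rem1}(i), provided the weights $(d_k,D_N)$ produce a comparable second-moment bound.
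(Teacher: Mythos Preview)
Your proof is correct and follows essentially the same approach as the paper's: a second-moment bound $\E[L_T^2]\lesssim(\log T)^{-1}$, almost-sure convergence along a subsequence with summable second moments via Borel--Cantelli, and interpolation using the uniform bound $M$. The paper takes the specific subsequence $T_k=2^{k^2}$ (your $a=2$ up to a constant), but otherwise the arguments coincide.
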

 A discrete analogue of Lemma \ref{lem1} can be formulated in a straightforward manner. 
 Lemma \ref{lem1} follows essentially from a  Borel-Cantelli argument, 
 and for the sake of completeness, 
 we present a short proof   in the Appendix \ref{APP}.

 \begin{remark}\rm 
The aforementioned
Lemma \ref{lem1} for $H_\theta = f( \wt{F}_\theta) - \E\big[f(\wt{F}_\theta)\big] $ with bounded Lipschitz $f$ is enough to prove Theorem \ref{main}.
A crucial point is the usage of the Clark-Ocone representation $H_\theta$
as an It\^o integral \eqref{com0}, and applying the moment estimate
of the Malliavin derivatives (Proposition \ref{prop:BZ23}).
This idea has been used in papers \cite{LZ1, LZ2, LZ3}
for similar studies on SPDEs driven by Gaussian noises.
However when the noise is not white in time (cf. \cite{BNQSZ, NXZ22}),
the strategy, based on It\^o's calculus, is not applicable any more. This motivates us to provide another proof using a combination of the Ibragimov-Lifshits' criteria for ASCLT   and the second-order Gaussian Poincar\'e inequalities; 
see also Remark \ref{rem3} for a further discussion.

 \end{remark}

\subsection{Proof of Theorem \ref{main} based on the Ibragimov-Lifshits' criterion}\label{ss_prf2}

%


Let us first state a variant of the Ibragimov-Lifshits' result.

 \noi
 \begin{proposition}\label{prop:IL} {\rm(}\cite[Ibragimov-Lifshits' criterion]{IL99}{\rm)}
A family of random variables $\{F_\theta\}_{\theta\geq 1}$ satisfies the {\rm ASCLT}
if $\theta\mapsto F_\theta$ is measurable almost surely, and

 \noi
\begin{align}
    \sup_{|s| \leq T}
    \int_2^\infty
    \dfrac{\E \big[  |\K_t(s)|^2\big]  }{t  \log t} dt <\infty,
  \label{IL}
\end{align}
for any finite $T >0$, where
\begin{align}\label{Kn}
\K_t(s)
\coloneqq \frac{1}{\log t} \int_1^t \frac{1}{\theta}
\big( e^{is F_\theta} - e^{-s^2/2} \big)  d\theta,
\quad t \in( 1, \infty).
\end{align}
\end{proposition}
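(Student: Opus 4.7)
The plan is to reduce the ASCLT to checking, for each $s$ in a fixed countable dense set $\mathcal{S}\subset\R$, the almost-sure scalar convergence $\mathbf{K}_T(s)(\omega)\to 0$ as $T\to\infty$. Since the Fourier transform of $\nu_T^\omega$ at $s$ equals
\[
\int_\R e^{isx}\,\nu_T^\omega(dx) = \mathbf{K}_T(s)(\omega) + e^{-s^2/2},
\]
this would give simultaneous convergence of the characteristic functions of $\nu_T^\omega$ to $e^{-s^2/2}$ on $\mathcal{S}$, outside a single $\PP$-null set. The weak convergence $\nu_T^\omega \LRA \g$ would then follow by L\'evy's continuity theorem together with an equicontinuity/tightness argument in $s$ (as in the original Ibragimov--Lifshits paper), or alternatively by running the identical scheme with bounded Lipschitz test functions in place of complex exponentials and invoking the separating class highlighted in Remark \ref{rem1}(ii).

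Fix $s\in\mathcal{S}$. Fubini's theorem combined with the hypothesis yields
\[
\int_2^\infty \frac{|\mathbf{K}_t(s)(\omega)|^2}{t \log t}\,dt < \infty \qquad\text{for $\PP$-a.e.\ } \omega.
\]
The change of variables $u=\log\log t$ (so that $du = dt/(t\log t)$) turns this into $L^2$-integrability on $[\log\log 2,\infty)$ of the function $h(u)\coloneqq \mathbf{K}_{\exp(e^u)}(s)(\omega)$. The crucial analytic input is a \emph{uniform Lipschitz bound} for $h$ in $u$. Differentiating the defining expression for $\mathbf{K}_t(s)$ gives
\[
\partial_t \mathbf{K}_t(s) = \frac{1}{t\log t}\bigl( e^{isF_t} - e^{-s^2/2} - \mathbf{K}_t(s)\bigr),
\]
so that $|\partial_u h(u)| = (t\log t)\cdot|\partial_t\mathbf{K}_t(s)|\leq 4$ uniformly in $u$ and $\omega$, using $|e^{isF_t}|=1$ and the trivial bound $|\mathbf{K}_t(s)|\leq 2$. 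A standard elementary lemma (a uniformly Lipschitz function in $L^2(du)$ must tend to $0$ at infinity) then forces $h(u)\to 0$, i.e.\ $\mathbf{K}_t(s)(\omega)\to 0$ along $t=\exp(e^u)\to\infty$; the same Lipschitz control in $u$ upgrades the statement to the full continuum limit $T\to\infty$. A countable union bound over $s\in\mathcal{S}$ then provides a single $\PP$-full-measure event on which convergence holds simultaneously for all $s\in\mathcal{S}$.

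The expected main obstacle is precisely this regularity step: $L^2$-integrability against the weight $dt/(t\log t)$ controls only an ``average'' decay of $|\mathbf{K}_t(s)|$, so passing from average to pointwise decay requires matching the weight with a modulus of continuity of $t\mapsto \mathbf{K}_t(s)$ on exactly the logarithmic-of-logarithmic time scale. The Lipschitz bound above is the correctly calibrated regularity, and its verification is the one genuinely analytic step; everything else (Fubini, the $L^2$-plus-Lipschitz decay lemma, the union bound, and the reduction via the separating class or L\'evy continuity) is routine bookkeeping.
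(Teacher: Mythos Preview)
Your argument for the almost-sure convergence $\K_t(s)(\omega)\to 0$ at each fixed $s$ is correct and is essentially the paper's proof in cleaner clothing. The paper partitions $(1,\infty)$ into blocks $I_j=[e^{h^j},e^{h^{j+1}}]$ (which in your variable $u=\log\log t$ are intervals of length $\log h$), extracts a minimizer $s_j\in I_j$ to get $\sum_j\E|\K_{s_j}(s)|^2<\infty$ from the hypothesis, and then uses the oscillation bound $|\K_a(s)-\K_b(s)|\le 2(1-1/h)$ on each block together with $h\downarrow 1$. Your change of variable $u=\log\log t$ and the derivative identity $\partial_u h = e^{isF_t}-e^{-s^2/2}-\K_t(s)$ make this structure explicit: the oscillation bound is just the Lipschitz constant $4$, and the block argument is just the elementary lemma ``uniformly Lipschitz and $L^2$ on a half-line implies decay to $0$''. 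The two routes are interchangeable; yours is arguably more transparent.

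The gap is in the second half. Convergence of $\widehat{\nu_T^\omega}(s)\to e^{-s^2/2}$ on a countable dense set $\mathcal S$ does \emph{not} by itself yield $\nu_T^\omega\Longrightarrow\gamma$; you need tightness of $\{\nu_T^\omega\}_T$ so that every subsequential weak limit is a genuine probability measure whose (continuous) characteristic function is pinned down by its values on $\mathcal S$. Your option (a), to import the tightness argument from Ibragimov--Lifshits, is exactly what the paper does: it proves $\int_{-r}^r|\K_t(s)|^2\,ds\to 0$ a.s.\ by the same block mechanism (and this is where the $\sup_{|s|\le T}$ in hypothesis \eqref{IL} is actually used), and then obtains tightness from the sinc inequality $\mu(\{|x|\le r^{-2}\})\ge \tfrac{1}{2r}\int_{-r}^r\phi_\mu(s)\,ds - r$. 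Your option (b), running the identical scheme with bounded Lipschitz test functions and invoking the separating class of Remark~\ref{rem1}(ii), does \emph{not} work: the hypothesis \eqref{IL} only controls the specific averages $\K_t(s)$ built from $e^{isx}$, and gives no information whatsoever about the analogous quantities $\frac{1}{\log t}\int_1^t\frac{1}{\theta}\bigl(\phi(F_\theta)-\int\phi\,d\gamma\bigr)\,d\theta$ for general bounded Lipschitz $\phi$. So the tightness step cannot be bypassed, and you should spell it out along the lines above.
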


In Ibragimov-Lifshits' original paper \cite{IL99},
the criterion
is proved for the discrete version (see Definition \ref{ASCLT}-(i)).
For readers' convenience, we provide a sketchy proof of Proposition \ref{prop:IL}
 in the Appendix \ref{APP} though it is almost identical to that in \cite{IL99}.


Let $F_\theta$ be defined as in \eqref{FK}
and  $\K_t(s)$ be given by \eqref{Kn} with $F_\theta$ replaced
by $\wt{F}_\theta = \frac{1}{\s_\theta} F_\theta$.
By expanding $|\K_t(s)|^2$, we   write
\begin{align*}
 |\K_t(s)|^2
 &=  \frac{1}{(\log t)^2}
 \int_{[1,t]^2} \frac{1}{\theta\w}
\big( e^{is \wt F_\theta} - e^{- \frac{s^2}{2} } \big)\big( e^{- is \wt F_\w} - e^{- \frac{s^2}{2} } \big) d\theta d\w  \\
 &=  \frac{1}{(\log t)^2}
 \int_{[1,t]^2} \frac{1}{\theta\w}
       \big( e^{is (\wt F_\theta - \wt F_\w)} + e^{-s^2} - e^{is \wt F_\theta } e^{- \frac{s^2}{2} }
          - e^{ - is \wt F_\w } e^{-\frac{s^2}2 } \big) d\theta d\w  \\
 &=  \mathbb{I}_t(s) -  e^{ - \frac{s^2}{2}}   \II_t(s),
 \end{align*}

 \noi
 where

 \noi
 \begin{align}\label{12s}
 \begin{aligned}
 \mathbb{I}_t(s)
 &\coloneqq
  \frac{1}{(\log t)^2}
 \int_{[1,t]^2} \frac{1}{\theta\w}\Big( e^{i s (\wt F_\theta - \wt F_\w)} -  e^{-s^2} \Big)\, d\theta d\w ,
\\
  \II_t(s) &\coloneqq  \frac{1}{\log t} \int_1^t \frac{1}{\theta} \big( e^{i s \wt F_\theta } 
  + e^{- i s \wt F_\theta } - 2 e^{-\frac{s^2}{2}} \big) \, d\theta .
 \end{aligned}
 \end{align}

 \noi
Therefore, thanks to Proposition \ref{prop:IL}, it suffices to show that
\begin{align}
\notag 
A_1(s) \coloneqq \int_2^\infty \frac{  \E\big[ \mathbb{I}_t(s) \big] }{t\log t} dt
 \quad
  \text{and}
   \quad
    A_2(s) \coloneqq \int_2^\infty \frac{  \E\big[ \II_t(s) \big] }{t\log t} dt,
    \quad s\in [-T, T]
\end{align}

\noi
are both uniformly bounded for any given $T>0$.


$\star$ {\it Estimation of the  $A_2$ term}.
Applying Euler's formula
$
e^{i z} + e^{- i z} = 2 \cos(z)$, $z\in\mathbb{R}$,
and
  $\E [e^{i s Y}] = e^{- \frac{s^2}{2}}$ for $Y\sim \g$, one can write,
with  $\phi_s(z) = 2\cos(s z)$, that

\noindent
\begin{align}
\begin{aligned}
\big| \mathbb{E}\big( e^{i s \wt F_\theta } + e^{- i s \wt  F_\theta} - 2 e^{-\frac{s^2}{2}} \big)   \big|
&=  \big| \mathbb{E}\big( \phi_s(\wt F_\theta)  -  \phi_s(Y)  \big)   \big|
  \leq 2 |s|  d_{\rm Wass}(\wt F_\theta, Y) ,
\end{aligned}
\label{T4}
\end{align}

\noindent
where we used 
\eqref{Wass}
and the fact that
$\phi_s$ is a Lipschitz function with ${\rm Lip}(\phi_s) \leq 2|s| \leq 2T$.
Then, it follows from \eqref{12s}, \eqref{T4}, and Theorem \ref{thm-cls-clt}-(iii) that
\begin{align*}
\sup_{|s|\leq T}\big|   \E [ \II_t(s)   ] \big|
&\les_T \frac{1}{\log t} \int_1^\infty \frac{d\theta}{\theta^{1 + \frac{\al}{1+\al}}}
\les_T \frac{1}{\log t},
\end{align*}

\noi
where $\lesssim_T$ suggests the implicit constant in the inequality only depends on $T$.
   Therefore, it holds that $\sup\{ |A_2(s)| : s\in [-T, T]   \} <\infty$ for any finite $T>0$.


$\star$
 {\it Estimation of the $A_1$ term.} We first write

 \noi
 \begin{align}
 \sup_{|s|\leq T} A_1(s)
& \leq 2\sqrt{2} T  \int_2^\infty \frac{1 }{t (\log t)^3}
 \bigg( \int_{[1,t]^2} \frac{1}{\theta\w} \,
d_{\rm Wass}\big( \tfrac{\wt F_\theta - \wt F_\w}{\sqrt{2}}, Y \big)
 \, d\theta d\w   \bigg)  dt  \notag \\
 &=4\sqrt{2} T  \int_2^\infty \frac{1 }{t (\log t)^3}
 \bigg( \int_{1 < \theta < \w < t} \frac{1}{\theta\w} \,
 d_{\rm Wass}\big( \tfrac{\wt F_\theta - \wt F_\w}{\sqrt{2}}, Y \big)
 \, d\theta d\w  \bigg)  dt ,
  \label{T5a}
 \end{align}

 \noi
 where we used $d_{\rm Wass}(-X, Y) =d_{\rm Wass}(X, Y)$ for $Y\sim \g$
 and the following bound
\[
\big| \E\big[ e^{i s (\wt F_\theta - \wt F_\w)} -  e^{-s^2}  \big] \big|
= \big| \E\big[ e^{i \sqrt{2} s (\frac{\wt F_\theta - \wt F_\w}{\sqrt{2}})} - e^{i \sqrt{2}s Y}  \big] \big|
\leq 2\sqrt{2}|s|  d_{\rm Wass}\big( \tfrac{\wt F_\theta - \wt F_\w}{\sqrt{2}}, Y \big).
\]


We {\bf claim} that there exist positive real numbers $\beta_1, \beta_2$, and $\beta_3$
such that

\noi
\begin{align}\label{claima}
d_{\rm Wass}\big( \tfrac{\wt F_\theta - \wt F_\w}{\sqrt{2}}, Y \big)
\les  \theta^{-\beta_1}  + \w^{-\beta_2}  + (\theta/\w)^{\beta_3}
\end{align}

\noi
for   $1 < \theta < \w <\infty$. It is then easy to deduce the finiteness of \eqref{T5a}
from the  claim \eqref{claima}. The rest of the proof is then devoted to
verifying the claim \eqref{claima}.


Observe that $\wt F_\theta$ and  $\wt F_\w$ are centered with unit variance, and thus,
\begin{align}
V_{\theta,\w}\coloneqq \E\Big[  \big( \tfrac{\wt F_\theta - \wt F_\w}{\sqrt{2}} \big)^2 \Big]
= 1 - {\rm Cov}(   \wt F_\theta , \wt F_\w).
\label{VTW}
\end{align}
It follows from Proposition \ref{prop:tara} with \eqref{VTW} that

\noi
\begin{align}\label{Q1}
d_{\rm Wass}\big( \tfrac{\wt F_\theta - \wt F_\w}{\sqrt{2}}  , Y \big)
\leq |  {\rm Cov}(   \wt F_\theta , \wt F_\w) | + \sum_{j=1}^3 \g_j(\theta, \w),
\end{align}

\noi
where $ \g_j(\theta, \w)$, $j=1,2,3$, are defined as in \eqref{tara2}
with $F =  \tfrac{\wt F_\theta - \wt F_\w}{\sqrt{2}}$
and $p=1+\al$,  $q = \min\{2, 1+2\al\}$ for some $\al\in(0, 1]$; 
namely, ignoring the constants

\noi
\begin{gather*}
  \g_1
 \coloneqq
  \bigg[\int_{\mathbf{Z}}  \bigg( \int_{\mathbf{Z}}  \big\| D_{\xi_2}(\wt F_\theta - \wt F_\w) \big\|_{2p}
   \big\| D_{\xi_1}D_{\xi_2}( \wt F_\theta - \wt F_\w) \big\|_{2p}  \,  m(d\xi_2) \bigg)^{1+\al}
  m(d\xi_1)\bigg]^{\frac{1}{1 + \al}},\\
  \g_2
 \coloneqq
 \bigg[ \int_{\mathbf{Z}}  \bigg( \int_{\mathbf{Z}}   \big\| D_{\xi_1}D_{\xi_2}  (\wt F_\theta - \wt F_\w) \big\|^2_{2p}  \,
   m(d\xi_2) \bigg)^{1+\al}
  m(d\xi_1) \bigg]^{\frac{1}{1 + \al}},
  \shortintertext{and}
  \g_3 \coloneqq
      \int_{\mathbf{Z}} \big\| D_\xi ( \wt F_\theta - \wt F_\w) \big\|_{q+1}^{q+1}  \, m(d\xi).
\end{gather*}


\noi \textbf{Step (i).} The estimate for ${\rm Cov}(\wt F_\theta , \wt F_\w)$ is already there, 
if one notices that ${\rm Cov}(\wt F_\theta , \wt F_\w) = \E [H_{\theta} H_{\w}]$, 
where $H$ is defined as in \eqref{def:h} with $f(x) = x$ for all $x\in \R$. 
As a consequence of  \eqref{com3}, for all $0< \theta < \w$,\footnote{To limit the length of this note, 
here
we take a shortcut by using directly \eqref{com3} although this does not
really fit the spirit of the second proof. Instead of an application of \eqref{com3},
the explicit chaos expansion of $\wt F_\theta$ can be used to verify
the bound \eqref{Q2};
we leave this for interested readers. It is worth pointing out that this approach based on the chaos expansion requires $f (x) \equiv x$ for all $x \in \R$ in \eqref{def:h}, and
the obtention of \eqref{com3} for general $f$ crucially relies on the Clark-Ocone formula
for $f(\wt F_\theta)$,
which is not available in the colored-in-time setting.
}  
\noi
\begin{align}\label{Q2}
|  {\rm Cov}(   \wt F_\theta , \wt F_\w) | \les (\theta/\w)^{\frac12}.
\end{align}


\noi \textbf{Step (ii).} In this step we provide the estimate for $\gamma_1$. It is clear that
\begin{align}
\begin{aligned}
  \g_1^{1 + \al}(\theta, \w) &
  \lesssim
  \mathbf{T}_1(\theta, \theta) + \mathbf{T}_1(\theta, \w)
 + \mathbf{T}_1(\w, \theta)  + \mathbf{T}_1(\w, \w) ,
 \end{aligned}
\label{gamma1a}
\end{align}

\noi
where
\[
 \mathbf{T}_1(\theta, \w) \coloneqq
  \int_{\mathbf{Z}}  \bigg[ \int_{\mathbf{Z}}  \| D_{\xi_2} \wt F_\theta  \|_{2p}
   \| D_{\xi_1}D_{\xi_2} \wt F_\w \|_{2p}  \,  m(d\xi_2) \bigg]^{1+\al}
  m(d\xi_1).
\]
Note that $ \mathbf{T}_1(\theta, \theta)$ and
 $\mathbf{T}_1(\w, \w)$ have been already dealt with in \cite{BZ23}, and we have

 \noi
 \begin{align} \label{gamma1b}
 \mathbf{T}_1(\theta, \theta)  \les \theta^{-\al} \quad \text{and} \quad  \mathbf{T}_1(\w, \w) \les \w^{-\al};
 \end{align}
 see equations (4.21)--(4.24) therein. The other two terms in \eqref{gamma1a} can be addressed similarly.
 Using Proposition \ref{prop:BZ23}, we can write next inequities analogous to \eqref{com2},

 \noi
\begin{align}\label{com2b}
\begin{aligned}
\| D_{r, y, z} F_\theta \|_{2p}
&\les |z| \cdot \int_{-\theta}^\theta G_{t_0}( x-y)dx,  \\
\|  D_{r_1, y_1, z_1}  D_{r_2, y_2, z_2} F_\w \|_{2p}
&\les |z_1z_2|\cdot \int_{-\w}^\w G_{t_0}(x-y_1) G_{t_0}(y_2-y_1)dx,
\end{aligned}
\end{align}

\noi
where $t_0$ is fixed as in Proposition \ref{prop:BZ23};
see also equations (4.18)--(4.20) in \cite{BZ23}\footnote{The bounds
in \eqref{com2b} follow easily from those in  \cite[(4.18)--(4.20)]{BZ23}
and triangle inequality with the fact $G_{t_0 - r} \leq G_{t_0}$. }.

Now, we are ready to estimate $\mathbf{T}_1(\theta, \w)$
by utilizing  \eqref{com2b} and Theorem \ref{thm-cls-clt}-(ii):

\noi
\begin{align*}
\mathbf{T}_1(\theta, \w) 
&\leq \frac{1}{(\s_\theta \s_\w)^{1+\al}}
\int_0^{t_0} \int_{\R\times\R_0} m(d \xi_1) \, |z_1|^{1+\al}
\bigg[ \int_0^{t_0} \int_{\R\times\R_0} m(d \xi_2)\, |z_2|^2 \notag \\
&\qquad
\cdot \bigg( \int_{-\theta}^\theta G_{t_0}(x_2 - y_2) dx_2 \bigg)
 \bigg( \int_{-\w}^\w G_{t_0}(x_1 - y_1)G_{t_0}(y_2 - y_1)    dx_1 \bigg)
 \bigg]^{1+\al}  \notag \\
& \les  \frac{1}{( \theta \w)^{\frac{1+\al}{2}}} 
 \int_{\R} dy_1 \widetilde{\mathbf{T}}_1(\theta, \w, y_1) ^{1+\al} ,
 \end{align*}
 
 \noi
 where $m(d \xi_i) = d r_i d y_i \nu(d z_i)$ for $i = 1,2$ (see Section \ref{sec_pre});
\[
   \widetilde{\mathbf{T}}_1(\theta, \w, y_1) \coloneqq \int_{\R}  dy_2  \int_{-\theta}^\theta   dx_2   \int_{-\w}^\w  dx_1
  G_{t_0}(x_2 - y_2)
 G_{t_0}(x_1 - y_1)G_{t_0}(y_2 - y_1) \leq t_0^3;
\]
  and in the last inequality we used   the finiteness of $m_2$ and $m_{1+\al}$ (see Footnote \ref{note3}). Therefore, an application of Jensen's inequality implies that

\noi
 \begin{align}
 \mathbf{T}_1(\theta, \w) &\les  \frac{t^{3\al}_0}{( \theta \w)^{\frac{1+\al}{2}}}
 \int_{\R} dy_1   \widetilde{\mathbf{T}}_1(\theta, \w, y_1)  
  \les \frac{\theta}{   ( \theta \w)^{\frac{1+\al}{2}}} \leq (\theta/\w)^{ \frac{1+\al}{2}} \label{gamma1c}
\end{align}
for $1\leq \theta \leq \w$, where in the second-to-last inequality, we have simply performed integration
in the exact order of $dx_1, dy_1, dy_2, dx_2$.
In the same way, we can obtain

\noi
\begin{align}
\mathbf{T}_1(\w, \theta)
 \leq (\theta/\w)^{ \frac{1+\al}{2}} \quad\text{for $1\leq \theta \leq \w$}.
\label{gamma1d}
\end{align}

\noi
Therefore, in view of \eqref{gamma1a}, \eqref{gamma1b},
\eqref{gamma1c}, and \eqref{gamma1d},
we get

\noi
\begin{align}\label{est1}
\g_1(\theta, \w)  \les \theta^{-\al} + \w^{-\al} + (\theta/\w)^{\frac{1+\al}2}.
\end{align}

\medskip

 \noi \textbf{ Step (iii).} 
For the term $\gamma_2$, we can write

\noi
\begin{align}
\begin{aligned}
 \g_2^{1 + \al}(\theta, \w)
 &\les   \int_{\mathbf{Z}}  \bigg[ \int_{\mathbf{Z}}   \| D_{\xi_1}D_{\xi_2} \wt F_\theta   \|^2_{2p}  \,
   m(d\xi_2) \bigg]^{1+\al}
  m(d\xi_1)  \\
  &\qquad +   \int_{\mathbf{Z}}  \bigg[ \int_{\mathbf{Z}}   \| D_{\xi_1}D_{\xi_2}  \wt F_\w \|^2_{2p}  \,
   m(d\xi_2) \bigg]^{1+\al}
  m(d\xi_1) \les \theta^{-\al} + \w^{-\al},
 \end{aligned}
   \label{est2}
 \end{align}
which is essentially done in \cite[(4.25)--(4.27)]{BZ23}.

\medskip

 \noi \textbf{Step (iv).} Now we consider the last term $\g_3(\theta, \w)$
 with $q = \min\{ 2,  1 + 2\al\}\in( 1, 2].$

\noi
\begin{align}
\begin{aligned}
 \g_3(\theta, \w)
 \les  \int_{\mathbf{Z}} \| D_\xi  \wt F_\theta   \|_{q+1}^{q+1}  \, m(d\xi)
 + \int_{\mathbf{Z}} \| D_\xi  \wt F_\w \|_{q+1}^{q+1}  \, m(d\xi)
 \les \theta^{-\frac{q-1}{2}} + \w^{-\frac{q-1}{2}},
 \end{aligned}
 \label{est3}
\end{align}

\noi
where the last step
is  essentially done in \cite[(4.28)--(4.30)]{BZ23}.
Claim \eqref{claima} follows from \eqref{Q1}, \eqref{Q2},
\eqref{est1}, \eqref{est2}, and \eqref{est3}.
Hence, the proof is complete.\qed


\begin{remark} \rm \label{rem3}
In this proof, we merged Ibragimov-Lifshits' criteria and the second-order Gaussian Poincar\'e inequalities.
The strategy was largely motivated by earlier works
\cite{BNT, CZthesis, Zheng17, AN22} based on a combination of Ibragimov-Lifshits' method
and the Malliavin-Stein approach (\cite{blue}).
With the chaos expansion, papers
\cite{BNT, CZthesis, Zheng17} have established sufficient conditions
(in terms of contractions) for the ASCLT on Gaussian, Poisson, and Rademacher
settings. Our strategy shares the same root as the aforementioned references
but
 differs by incorporating the second-order Gaussian Poincar\'e inequalities.
This novelty avoids lengthy computation
of asymptotic negligibility of the (star-)contractions, and will be further illustrated in the future.
\end{remark}

\appendix

\section{Proofs of Lemma \ref{lem1} and Proposition \ref{prop:IL}}\label{APP}

 \begin{proof}[Proof of Lemma \ref{lem1}]
 Let us first compute the second moment of $L_T$ in \eqref{com4}:

 \noi
 \begin{align*}
 \E\big[ L_T^2 \big]
 &= \frac{1}{(\log T)^2} \int_1^T \int_1^T \frac{1}{\theta \w}  \E[ H_\theta H_\w ] \,d\theta d\w \\
 &\leq \frac{2}{(\log T)^2} \int_{1 < \theta < \w \leq T}  \frac{1}{\theta \w}
 C_\beta  \big( \theta / \w \big)^{\beta} \,d\theta d\w
 \leq \frac{2C_\beta}{\beta \log T}.
 \end{align*}
 In particular, we get (by Fubini)
 \[
 \E \sum_{k=1}^\infty L^2_{2^{k^2}} < +\infty,
\quad
 \text{and therefore  $L_{2^{k^2}} \xrightarrow{a.s.} 0$
 as $k\to+\infty$.}
 \]

\noi
Next, we will show the almost sure convergence
 along the continuum parameter (as $T\to+\infty$).
By the uniform boundedness of $\{H_{\theta}\}_{\theta > 0}$, one can find a constant $M > 0$ such that
$
 | H_\theta | \leq M$,
  $\forall \theta > 0$,
 with probability one. Then, for any $T > 1$, one can find some (unique) $k = k_T\in\N_{\geq 0}$
 such that
 \begin{align}\label{KT}
 2^{k^2_T} \leq T <  2^{(k_T+1)^2}.
 \end{align}
 It follows that

 \noi
 \begin{align*}
| L_T |
 & =  \Big| \frac{\log (2^{k^2} ) }{\log T} L_{2^{k^2}}
 +  \frac{1}{\log T} \int_{2^{k^2}}^T  \frac{1}{\theta} H_\theta \, d\theta \Big| \\
 &
 \leq  \big| L_{2^{k^2}} \big| + \frac{M}{\log T} \big[  \log T - \log (2^{k^2}   \big) \big]
 \quad\text{with $k = k_T$ as in \eqref{KT}} \\
 &\leq
 \big| L_{2^{k^2}} \big| + \frac{M}{k^2} \big[  (k+1)^2 - k^2  \big],
  \end{align*}

 \noi
 which goes to $0$ as $T\to\infty$ ($k = k_T\to\infty$ as well).
 Hence the proof is complete. \qedhere

 \end{proof}

\begin{proof}[Proof of Proposition \ref{prop:IL}]
The proof consists of two parts: (i) 
for any $s\in\R$,
$\K_t(s)\to 0$ a.s. as $t\to\infty$; and (ii)
for $\PP$-almost
every $\omega\in\Omega$
and for any sequence $t_n\uparrow \infty$ (as $n\to\infty$),
  the family of probability measures
$\{ \nu_{t_n}^\o:  n\geq 1 \}$ is tight,
where we have used the notation $\nu_T^\o$ from \eqref{def2}.
Then, we can deduce from (i) and (ii) together that
$\PP (\{ \o\in\Omega: \nu^\o_T\LRA \g\,\,\text{as $T\to+\infty$} \}) =1$.
That is, ASCLT holds for $\{F_\theta\}_{\theta\geq 1}$.


To show part (i), we fix any $s\in\R$. Let $h>1$ and define
\[
I_j = [ e^{h^j},  e^{h^{j+1}} ], \quad j\in\N_{\geq 0}.
\]
It is clear that $t\mapsto \E  \big[ |\K_t(s)|^2 \big]$ is  continuous, and therefore
there exists some (deterministic) $s_j\in I_j$ such that

\noi
\begin{align}
\begin{aligned}
 \E  \big[ |\K_{s_j}(s)|^2 \big]
 = \min_{t\in I_j}    \E  \big[ |\K_t(s)|^2 \big]
 &\leq  \bigg(\int_{I_j}
    \dfrac{1  }{t  \log t} dt  \bigg)^{-1}
      \int_{I_j}
    \dfrac{\E \big[  |\K_t(s)|^2\big]  }{t  \log t} dt \\
    &= \frac{1}{\log h}   \int_{I_j}
    \dfrac{\E \big[  |\K_t(s)|^2\big]  }{t  \log t} dt,
\end{aligned}
\label{cont}
\end{align}

\noi
which is summable in $j$ by assumption \eqref{IL}.
It follows from Fubini that

\noi
\begin{align} \label{IL1}
\text{almost surely,
$\K_{s_j}(s) \to 0$ as $j\to\infty$.}
\end{align}

\noi
Next, for    $e^{h^j} \leq  a < b \leq e^{h^{j+1}}$, we get

\noi
\begin{align}\label{IL2}
\begin{aligned}
\big| \K_a(s) - \K_b(s) \big|
&= \Big| \Big( \frac{1}{\log a} -\frac{1}{\log b} \Big)\int_1^a \frac{1}{\theta} e^{isF_\theta}\, d\theta
 - \frac{1}{\log b} \int_a^b \frac{1}{\theta} e^{isF_\theta}\, d\theta \Big| \\
 &\leq  2(1- \tfrac{1}{h} ).
\end{aligned}
\end{align}

\noi
Thus,  part (i) follows from \eqref{IL1} and \eqref{IL2}: almost surely,

\noi
\begin{align}\label{IL3}
 \limsup_{t\to\infty} | \K_t(s) | \leq 2(1- \tfrac{1}{h} ) \quad \text{for all } h > 1.
\end{align}


Now, let us continue with part (ii).
First, we deduce from the same continuity argument as in
\eqref{cont} that for $r > 0$, there exists some (deterministic)
$s_j^\ast \in I_j$ (depending on $r$ as well) such that

\noi
\begin{align*}
\E\int_{-r}^r  | \K_{s_j^\ast}(s) |^2 ds
&= \min_{t\in I_j} \E\int_{-r}^r  | \K_{t}(s) |^2 ds
\leq \frac{1}{\log h}  \int_{I_j} \frac{dt}{t \log t}  \bigg( \E\int_{-r}^r  | \K_{t}(s) |^2 ds \bigg),
\end{align*}

\noi
which, together with Fubini and the assumption \eqref{IL}, implies that
\begin{align*}
\E \sum_{j}  \int_{-r}^r  | \K_{s_j^\ast}(s) |^2 ds
\leq \frac{2r}{\log h} \sup_{|s|\leq r}
 \int_{2}^\infty \frac{1}{t \log t} \E \big[   | \K_{t}(s) |^2 \big]  dt <+\infty.
\end{align*}

\noi
Thus, it holds almost surely that
$\int_{-r}^r  | \K_{s_j^\ast}(s) |^2 ds \to 0$ as $j\to+\infty$.
Therefore, we can obtain by the same arguments as in
\eqref{IL1}, \eqref{IL2}, and \eqref{IL3} that
with probability one,
$
\int_{-r}^r  | \K_{t}(s) |^2 ds \to 0
$
as $t\to+\infty$. Then, we get from H\"older inequality that
\noi
\begin{align}\label{IL3b}
\text{almost surely}, \quad
\int_{-r}^r  \phi_{\nu_t^\o}(s)  ds \to \int_{-r}^r  \phi_{\g}(s)  ds
\quad\text{as $t\to+\infty$,}
\end{align}

\noi
where $\phi_\mu(s)=\int_{\R} e^{is x}\mu(dx)$ denotes the characteristic
function of a probability measure $\mu$ on $\R$, and $\g$ denotes
the standard Gaussian measure on $\R$ in this paper.
Using the fact $|\sin(z)| \leq |z|$, $\forall z\in\R$, we deduce that

\noi
\begin{align} \label{IL4}
\begin{aligned}
\mu\big(\{x\in\R: |x| \leq \tfrac{1}{r^2} \}\big)
&\geq \int_{|x|\leq r^{-2}} \frac{\sin(rx)}{rx} \mu(dx)
\\
&\geq  \int_{\R} \frac{\sin(rx)}{rx} \mu(dx)  - r
= \frac{1}{2r} \int_{-r}^r \phi_\mu(s)ds  -r.
\end{aligned}
\end{align}

\noi
It follows from \eqref{IL4} and \eqref{IL3b} that
for $\PP$-almost every $\omega\in\Omega$ and for
every $\eps > 0$,

\noi
\begin{align*}
\liminf_{t\to+\infty} \nu_t^\o(|x| \leq r^{-2})
& \geq  \frac{1}{2r} \int_{-r}^r \phi_\g(s)ds  -r \geq 1 - \frac{\eps}{2}
\end{align*}
for small enough $r = r_\eps$ that does not depend on $\o$.
For any increasing divergent sequence $\{t_n\}$, the above bound indicates that
there is some $N= N_\eps$ such that
\begin{align} \label{IL5}
 \nu_{t_n}^\o(|x| \leq r^{-2}_\eps) \geq 1 - \eps, \quad\forall n\geq N_\eps;
\end{align}

\noi
while by choosing another small enough $r'_\eps > 0$, we get
\begin{align} \label{IL6}
 \nu_{t_n}^\o(|x| \leq  \tfrac{1}{( r'_\eps)^2}) \geq 1 - \eps, \quad\forall n < N_\eps.
\end{align}
Combining \eqref{IL5} and \eqref{IL6} yields the tightness of $\{ \nu_{t_n}^\o: n\geq 1\}$.
Therefore, the result in part (i) implies that $ \nu_{t_n}^\o\LRA \g$ as $n\to\infty$,
and such weak convergence holds for true for any increasing divergent sequence $\{t_n\}_{n\in \N}$.
Hence, we proved  $\PP \{ \o\in\Omega: \nu^\o_T\LRA \g\,\,\text{as $T\to+\infty$} \}=1$.
\qedhere

\end{proof}

\bibliographystyle{amsplain}

\end{document}